\newcommand{\frakg}{\mathfrak{g}}
\newcommand{\frakk}{\mathfrak{k}}
\newcommand{\frakp}{\mathfrak{p}}
\newcommand{\frakt}{\mathfrak{t}}
\newcommand{\CC}{\mathbb{C}}
\newcommand{\NN}{\mathbb{N}}
\newcommand{\calF}{\mathcal{F}}
\newcommand{\calH}{\mathcal{H}}
\newcommand{\calM}{\mathcal{M}}
\newcommand{\calU}{\mathcal{U}}
\DeclareMathOperator{\Hom}{Hom}
\DeclareMathOperator{\id}{id}
\theoremstyle{plain}
\newtheorem{theorem}{Theorem}
\newtheorem*{theorem*}{Theorem}
\newtheorem{lemma}[theorem]{Lemma}
\newtheorem*{fact*}{Fact}
\newtheorem{thmalph}{Theorem}
\newtheorem{coralph}[thmalph]{Corollary}
\theoremstyle{definition}
\newtheorem{remark}[theorem]{Remark}
\title[Direct integral decomposition in branching laws for real reductive groups]{On the direct integral decomposition in branching laws for real reductive groups}
\author{Jan Frahm}
\address{Department of Mathematics, Aarhus University, Ny Munkegade 118, 8000 Aarhus C, Denmark}
\email{frahm@math.au.dk}
\begin{document}

\subjclass[2010]{Primary 22E45; Secondary 22E46.}

\keywords{Real reductive groups, unitary representations, branching laws, direct integral, pointwise defined, smooth vectors, symmetry breaking operators}

\maketitle

\begin{abstract}
The restriction of an irreducible unitary representation $\pi$ of a real reductive group $G$ to a reductive subgroup $H$ decomposes into a direct integral of irreducible unitary representations $\tau$ of $H$ with multiplicities $m(\pi,\tau)\in\NN\cup\{\infty\}$. We show that on the smooth vectors of $\pi$, the direct integral is pointwise defined. This implies that $m(\pi,\tau)$ is bounded above by the dimension of the space $\Hom_H(\pi^\infty|_H,\tau^\infty)$ of intertwining operators between the smooth vectors, also called \emph{symmetry breaking operators}, and provides a precise relation between these two concepts of multiplicity.
\end{abstract}

\section*{Introduction}

Let $G$ be a real reductive group and $H\subseteq G$ a reductive subgroup. Then, for any irreducible unitary representation $(\pi,\calH_\pi)$ of $G$, its restriction to $H$ decomposes into a direct integral of irreducible unitary representations $(\tau,\calH_\tau)$ of $H$, i.e. there exists an $H$-equivariant unitary isomorphism
\begin{equation}
	T:\calH_\pi \to \int^\oplus_{\widehat{H}}\calH_\tau\otimes\calM_{\pi,\tau}\,d\mu_\pi(\tau),\label{eq:DirectIntegralDecomposition}
\end{equation}
where $\calM_{\pi,\tau}$ is a family of Hilbert spaces (called \emph{multiplicity spaces}) and $\mu_\pi$ a Borel measure on the unitary dual $\widehat{H}$ of $H$. Here, $H$ acts only on the first factor of $\calH_\tau\otimes\calM_{\pi,\tau}$, so
$$ m(\pi,\tau)=\dim\calM_{\pi,\tau}\in\NN\cup\{\infty\} $$
is the \emph{multiplicity of $\tau$ in $\pi$}. Note that the function $\tau\mapsto m(\pi,\tau)$ is unique up to a set of measure zero, and in this sense the direct integral decomposition is unique.

The map $T$ is in general not \emph{pointwise defined}, i.e. there do not exist continuous linear maps $A_{\pi,\tau}:\calH_\pi\to\calH_\tau\otimes\calM_{\pi,\tau}$ such that $T(v)_\tau=A_{\pi,\tau}(v)$ for $\mu_\pi$-almost every $\tau\in\widehat{H}$. In fact, the existence of a non-zero $H$-equivariant continuous linear map $\calH_\pi\to\calH_\tau$ implies that $\tau$ occurs discretely inside $\pi|_H$, i.e. $\mu_\pi(\{\tau\})>0$. To also capture the continuous part of the decomposition, one has to restrict to a subspace of $\calH_\pi$. We show that the dense subspace $\calH_\pi^\infty$ of smooth vectors is sufficient for this purpose:

\begin{thmalph}\label{thm:Main}
	The restriction of $T$ to the smooth vectors $\calH_\pi^\infty$ is pointwise defined, i.e. for every $\tau\in\widehat{H}$ there exists an $H$-equivariant continuous linear map $A_{\pi,\tau}^\infty:\calH_\pi^\infty\to\calH_\tau\otimes\calM_{\pi,\tau}$ such that
	$$ T(v)_\tau = A_{\pi,\tau}^\infty(v) \qquad \mbox{for $\mu_\pi$-almost every $\tau\in\widehat{H}$.} $$
\end{thmalph}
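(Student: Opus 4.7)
My approach is to construct $A^\infty_{\pi,\tau}$ by combining Dixmier--Malliavin for the $H$-action with an elliptic regularisation via a Nelson-type Laplacian in $U(\frakh)$. Since $G$-smoothness implies $H$-smoothness, it suffices to work with $H$-smooth vectors throughout.

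I would choose a positive elliptic element $\Delta\in U(\frakh)$ of Nelson type, so that $\pi(\Delta)$ is positive essentially self-adjoint and the seminorms $v\mapsto\|\pi(\Delta)^k v\|$ generate a Fréchet topology on $\calH_\pi^\infty$ at least as fine as the standard one. Under the direct integral \eqref{eq:DirectIntegralDecomposition}, $\pi(\Delta)$ corresponds to $\int^\oplus\tau(\Delta)\otimes I\,d\mu_\pi(\tau)$. Crucially, admissibility of each irreducible unitary $\tau\in\widehat{H}$ (Harish-Chandra) guarantees that $\tau(\Delta)$ has discrete spectrum with finite-dimensional eigenspaces on $K_H$-isotypes and eigenvalues tending to $+\infty$; in particular $(1+\tau(\Delta))^{-k}$ is Hilbert--Schmidt for all $k$ exceeding a constant $k_0$ depending only on $\dim H$. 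Given $v\in\calH_\pi^\infty$, I would then invoke Dixmier--Malliavin for $H$ to write $v=\sum_i \pi(f_i) u_i$ with $f_i\in C_c^\infty(H)$ and $u_i\in\calH_\pi$, set $g_i:=(1+\Delta)^k f_i\in C_c^\infty(H)$ so that $\tau(f_i)=(1+\tau(\Delta))^{-k}\tau(g_i)$ on each fibre, and \emph{define}
\[ A^\infty_{\pi,\tau}(v) \;:=\; \sum_i\bigl((1+\tau(\Delta))^{-k}\tau(g_i)\otimes I\bigr)\,\tilde T(u_i)_\tau, \]
where $\tilde T(u_i)$ is a fixed measurable representative of $T(u_i)$. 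The fibre operator factors through the Hilbert--Schmidt smoothing $(1+\tau(\Delta))^{-k}$ composed with a bounded operator of norm at most $\|g_i\|_{L^1(H)}$, so the sum is pointwise well-defined in $\calH_\tau\otimes\calM_{\pi,\tau}$; agreement with $T(v)_\tau$ for $\mu_\pi$-a.e.\ $\tau$ is built in, and $H$-equivariance is automatic since $\pi(h)v=\sum_i \pi(L_h f_i) u_i$ and $\tau(L_h f_i)=\tau(h)\tau(f_i)$.

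The main obstacle is to establish that this construction is intrinsically well-defined at \emph{every} $\tau$ (not just $\mu_\pi$-almost all) and depends Fréchet-continuously on $v\in\calH_\pi^\infty$. I would address this via a separability argument: pick a countable dense $\{v_n\}\subseteq\calH_\pi^\infty$, fix one decomposition for each, and take a common $\mu_\pi$-null set $N$ outside of which all the finitely many $\tilde T(u_i^{(n)})$ are simultaneously defined. Off $N$, continuity in $v$ --- coming from the uniform bounds $\|\tau(g_i)\|_{\mathrm{op}}\le\|g_i\|_{L^1}$ combined with a controlled dependence of the Dixmier--Malliavin decomposition on $v$ --- extends the assignment uniquely to all smooth $v$; for $\tau\in N$ one may set $A^\infty_{\pi,\tau}:=0$, since the almost-everywhere equality in the theorem is vacuously satisfied there. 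The delicate technical heart of the argument is verifying that the Dixmier--Malliavin data $(f_i,u_i)$ can be chosen with continuous dependence on $v$, with $\|u_i\|_{\calH_\pi}$ controlled by a Fréchet seminorm of $v$ --- a quantitative variant of Dixmier--Malliavin that ultimately delivers the Fréchet-continuity of $A^\infty_{\pi,\tau}$ and closes the argument.
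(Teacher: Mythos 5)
There is a genuine gap, and it sits exactly at the point you yourself flag as the ``delicate technical heart''. Your construction defines $A^\infty_{\pi,\tau}(v)$ only after choosing a Dixmier--Malliavin factorization $v=\sum_i\pi(f_i)u_i$ and measurable representatives $\tilde T(u_i)$, and neither choice is canonical, linear, or continuous in $v$. No quantitative version of Dixmier--Malliavin with continuous \emph{linear} dependence of the data $(f_i,u_i)$ on $v$ is available, and your density argument does not repair this: for a countable dense set $\{v_n\}\subseteq\calH_\pi^\infty$ with independently chosen factorizations, the assignment $v_n\mapsto\sum_i(\tau(f_i^{(n)})\otimes I)\tilde T(u_i^{(n)})_\tau$ need not be compatible with linearity (the data for $v_n+v_m$ bear no relation to those for $v_n$ and $v_m$), and the bound by $\sum_i\|g_i^{(n)}\|_{L^1}\|u_i^{(n)}\|$ is a bound in terms of the chosen factorization, not in terms of a Fr\'echet seminorm of $v_n$; so there is no uniform continuity with which to extend from the dense set. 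Moreover, choosing representatives $\tilde T(u)_\tau$ linearly in $u$ at a fixed $\tau$ is precisely the pointwise-definedness problem you are trying to solve, so the construction is circular once $v$ is allowed to vary. The fibrewise Hilbert--Schmidt smoothing $(1+\tau(\Delta))^{-k}$ (which is indeed HS for large $k$, by admissibility of $\tau$ and the Weyl dimension formula) does no work here: boundedness of $\tau(f_i)$ already suffices for the a.e.\ identity $T(\pi(f)u)_\tau=(\tau(f)\otimes I)\tilde T(u)_\tau$, and HS smoothing on each fibre does not make the map into the direct integral Hilbert--Schmidt, which is what would actually give pointwise definedness.

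The missing idea is to put the Hilbert--Schmidt property on the \emph{source} rather than on the fibres. The paper's proof introduces the Sobolev scale $\calH_\pi^N$ defined by the Casimir $\Delta_\frakk$ of the maximal compact subgroup of $G$ (not of $H$: $\pi|_{K_H}$ is in general not admissible, whereas $K$-admissibility of $\pi$ is exactly what Harish--Chandra provides), shows via Harish--Chandra's bound $\dim\calH[\sigma]\leq C'(\dim\sigma)^2$ and the Weyl dimension formula that the embedding $\calH_\pi^N\hookrightarrow\calH_\pi$ is Hilbert--Schmidt for $4N>\dim\frakk$ (Lemma~\ref{lem:SobolevEmbedding}), and then applies the Gelfand--Kostyuchenko theorem (Theorem~\ref{thm:GKThm}): a Hilbert--Schmidt operator into a direct integral is pointwise defined. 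Composing with the continuous inclusion $\calH_\pi^\infty\hookrightarrow\calH_\pi^N$ gives continuous linear fibre maps, and $H$-equivariance for almost every $\tau$ follows by letting $h$ and $v$ run through countable dense subsets, as in Lemma~\ref{lem:DenseImageAndEquivariance}. Your instinct that admissibility and elliptic smoothing are the relevant tools is sound, but they must be combined into an HS factorization of the inclusion $\calH_\pi^\infty\to\calH_\pi$, after which Dixmier--Malliavin is not needed at all.
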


We remark that every $H$-equivariant continuous linear map $\calH_\pi^\infty\to\calH_\tau$ automatically maps into the smooth vectors $\calH_\tau^\infty$ of $\tau$, i.e. is contained in $\Hom_H(\calH_\pi^\infty,\calH_\tau^\infty)$. Such operators are also called \emph{symmetry breaking operators} (see Kobayashi~\cite{Kob15}). The space of symmetry breaking operators has been studied intensively in connection with finite multiplicity/multiplicity one statements (see e.g. \cite{KO13,SZ12}) and branching laws for specific pairs of groups $(G,H)$ (see e.g. \cite{CO11,FW20,KS15,Moe17}). Theorem~\ref{thm:Main} shows that the direct integral decomposition \eqref{eq:DirectIntegralDecomposition} of an irreducible unitary representation can always be constructed in terms of symmetry breaking operators. This statement might have been common knowledge, but we could not find a reference in the literature.

Theorem~\ref{thm:Main} implies an upper bound for the multiplicity $m(\pi,\tau)$ in terms of the dimension of the space $\Hom_H(\pi^\infty|_H,\tau^\infty)=\Hom_H(\calH_\pi^\infty,\calH_\tau^\infty)$ of symmetry breaking operators between the smooth vectors of $\pi$ and $\tau$:

\begin{coralph}\label{cor:Main}
	For $\mu_\pi$-almost every $\tau\in\widehat{H}$:
	$$ m(\pi,\tau)\leq\dim\Hom_H(\pi^\infty|_H,\tau^\infty). $$
\end{coralph}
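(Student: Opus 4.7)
My plan is to extract from each vector of $\calM_{\pi,\tau}$ a symmetry breaking operator and to show that the resulting assignment is injective for $\mu_\pi$-almost every $\tau$. Using Theorem~\ref{thm:Main}, for each $e\in\calM_{\pi,\tau}$ I would define $\Phi_\tau(e)\colon\calH_\pi^\infty\to\calH_\tau$ by contracting the multiplicity factor,
$$\Phi_\tau(e)(v) = (\id_{\calH_\tau}\otimes\langle\blank,e\rangle)(A^\infty_{\pi,\tau}(v)).$$
The operator $\Phi_\tau(e)$ is $H$-equivariant (since $H$ acts trivially on $\calM_{\pi,\tau}$) and continuous, so by the remark in the introduction it automatically maps into $\calH_\tau^\infty$ and defines an element of $\Hom_H(\calH_\pi^\infty,\calH_\tau^\infty)$. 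Injectivity of the linear map $\Phi_\tau\colon\calM_{\pi,\tau}\to\Hom_H(\calH_\pi^\infty,\calH_\tau^\infty)$ would immediately give $m(\pi,\tau)=\dim\calM_{\pi,\tau}\leq\dim\Hom_H(\calH_\pi^\infty,\calH_\tau^\infty)$.

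Next I would identify the kernel of $\Phi_\tau$. The closure $W_\tau$ of the image of $A^\infty_{\pi,\tau}$ is a closed $H$-invariant subspace of $\calH_\tau\otimes\calM_{\pi,\tau}$. Since $\tau$ is irreducible and $H$ acts only on the first factor, Schur's lemma forces $W_\tau=\calH_\tau\otimes U_\tau$ for a closed subspace $U_\tau\subseteq\calM_{\pi,\tau}$. A direct check shows $\Phi_\tau(e)=0$ iff $W_\tau\perp\calH_\tau\otimes\CC e$ iff $e\in U_\tau^\perp$, so $\ker\Phi_\tau=U_\tau^\perp$. The task thus reduces to proving $U_\tau=\calM_{\pi,\tau}$ for $\mu_\pi$-almost every $\tau$.

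For this final step I would argue by contradiction using density. Since $T$ is unitary and $\calH_\pi^\infty$ is dense in $\calH_\pi$, the image $T(\calH_\pi^\infty)$ is dense in $\int^\oplus\calH_\tau\otimes\calM_{\pi,\tau}\,d\mu_\pi(\tau)$. If the set $E=\{\tau:U_\tau\neq\calM_{\pi,\tau}\}$ had positive $\mu_\pi$-measure, a measurable selection would yield Borel unit sections $\tau\mapsto h(\tau)\in\calH_\tau$ and $\tau\mapsto e(\tau)\in U_\tau^\perp\setminus\{0\}$ over $E$; the square-integrable section $\xi_\tau=\1_E(\tau)\cdot h(\tau)\otimes e(\tau)$ would be nonzero and orthogonal to $T(\calH_\pi^\infty)$, contradicting density.

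The main obstacle I anticipate is the measurable selection: one must verify that $\tau\mapsto U_\tau$ is measurable with respect to the measurable field structure on $\{\calM_{\pi,\tau}\}$ underlying the direct integral, so that $U_\tau^\perp$ admits a nonzero Borel section on $E$. This should follow from measurable dependence of $A^\infty_{\pi,\tau}$ on $\tau$ built into the construction proving Theorem~\ref{thm:Main}, but pinning down this measurability is the technical point requiring the most care.
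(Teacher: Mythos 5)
Your reduction is essentially the paper's: pairing $A^\infty_{\pi,\tau}(v)$ against vectors $e\in\calM_{\pi,\tau}$ is exactly the paper's decomposition $A^\infty_{\pi,\tau}(v)=\sum_\alpha A^\infty_{\pi,\tau,\alpha}(v)\otimes w_\alpha$ into component symmetry breaking operators (your $\Phi_\tau(w_\alpha)=A^\infty_{\pi,\tau,\alpha}$, up to the harmless fact that $\Phi_\tau$ is antilinear in $e$), and your identification $\ker\Phi_\tau=U_\tau^\perp$ correctly reduces everything to the claim that $A^\infty_{\pi,\tau}$ has dense image for $\mu_\pi$-almost every $\tau$. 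The gap is precisely where you flag it: your contradiction argument requires a measurable field $\tau\mapsto U_\tau$ (equivalently $\tau\mapsto W_\tau$) and a measurable nonzero section of $W_\tau^\perp$ over $E$, and you do not establish this. As written, the proof is incomplete at its last step, and note also that insisting on a product-form section $h(\tau)\otimes e(\tau)$ is an unnecessary extra selection problem; a unit section of $W_\tau^\perp=\calH_\tau\otimes U_\tau^\perp$ would already suffice.

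The paper closes exactly this gap without any selection theorem, via Lemma~\ref{lem:DenseImageAndEquivariance}~\eqref{lem:DenseImageAndEquivariance1}, applied to the pointwise defined restriction of $T$ to the Sobolev space $\calH_\pi^N$ (which still has dense image, since $\calH_\pi^\infty$ is dense in $\calH_\pi$ and $T$ is unitary). The point is that one never needs to select vectors in $W_\tau^\perp$: it is enough to test density against the countable fundamental family $(s_n)$ of measurable sections built into the direct integral. For each $n$, the scalar function $\tau\mapsto\operatorname{dist}\bigl(s_n(\tau),\overline{A^\infty_{\pi,\tau}(\calH_\pi^\infty)}\bigr)=\inf_k\|s_n(\tau)-T(v_k)(\tau)\|$ (with $(v_k)$ a countable dense subset of the domain) is measurable, and if it were positive on a set of positive measure, then $\|s_n-T(v)\|$ would be bounded below uniformly in $v$, contradicting density of the image of $T$; taking the union over $n$ gives dense image of $A^\infty_{\pi,\tau}$ almost everywhere. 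If you want to keep your own formulation, replace the measurable-selection step by this countable-family argument (or simply invoke Lemma~\ref{lem:DenseImageAndEquivariance}~\eqref{lem:DenseImageAndEquivariance1}); with that substitution your proof is correct and coincides with the paper's.
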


We remark that, although this upper bound is attained for some representations $\tau$, the right hand side might be strictly larger in many cases. This means that not every non-trivial symmetry breaking operator contributes to the decomposition of the unitary representation.\\

\textbf{Acknowledgments.} We thank an anonymous referee for pointing out the reference \cite{Li18}. The author was supported by a research grant from the Villum Foundation (Grant No. 00025373).

\section{Direct integrals of Hilbert spaces}

We briefly recall the construction of direct integrals of Hilbert spaces following the exposition in \cite[Section 8]{KS18}. All Hilbert spaces are assumed to be separable.

Let $(\calH_\lambda)_{\lambda\in\Lambda}$ be a family of Hilbert spaces indexed by a second-countable topological space $\Lambda$ and let $\mu$ be a $\sigma$-finite Borel measure on $\Lambda$. Denote by $\langle\cdot,\cdot\rangle_\lambda$ the inner product on $\calH_\lambda$. We identify elements $s\in\prod_{\lambda\in\Lambda}\calH_\lambda$ with sections $s:\Lambda\to\bigsqcup_{\lambda\in\Lambda}\calH_\lambda$ satisfying $s(\lambda)\in\calH_\lambda$ for every $\lambda\in\Lambda$. Suppose we are given a subspace $\calF\subseteq\prod_{\lambda\in\Lambda}\calH_\lambda$, called the space of \emph{measurable sections}, satisfying:
\begin{itemize}
	\item For all $s,t\in\calF$ the map $\Lambda\to\CC,\,\lambda\mapsto\langle s(\lambda),t(\lambda)\rangle_\lambda$ is measurable.
	\item If $s\in\prod_{\lambda\in\Lambda}\calH_\lambda$ is such that $\lambda\mapsto\langle s(\lambda),t(\lambda)\rangle_\lambda$ is measurable for all $t\in\calF$, then $s\in\calF$.
	\item There exists a countable subset $(s_n)_{n\in\NN}\subseteq\calF$ such that $\{s_n(\lambda):n\in\NN\}$ spans a dense subspace of $\calH_\lambda$ for every $\lambda\in\Lambda$.
\end{itemize}
The family $(\calH_\lambda)_{\lambda\in\Lambda}$ together with the measure $\mu$ and the subspace $\calF$ of measurable sections is called a \emph{measurable family of Hilbert spaces}. The direct integral of such a family is defined as the Hilbert space
$$ \int^\oplus_\Lambda\calH_\lambda\,d\mu(\lambda) = \left.\left\{s\in\calF:\int_\Lambda\langle s(\lambda),s(\lambda)\rangle_\lambda<\infty\right\}\right/\sim $$
of square integrable sections modulo the subspace of sections which are zero almost everywhere. This Hilbert space carries the obvious inner product.

A continuous linear map $T:E\to\int^\oplus_\Lambda\calH_\lambda\,d\mu(\lambda)$ from a topological vector space $E$ into a direct integral is said to be \emph{pointwise defined} if there exists a continuous linear map $T_\lambda:E\to\calH_\lambda$ for every $\lambda\in\Lambda$ such that for every $v\in E$:
$$ T(v)(\lambda)=T_\lambda(v) \qquad \mbox{for almost every }\lambda\in\Lambda. $$

\begin{theorem}[{Gelfand--Kostyuchenko, see e.g. \cite[Theorem 1.5]{Ber88}}]\label{thm:GKThm}
	Every Hilbert--Schmidt operator $T:\calH\to\int^\oplus_\Lambda\calH_\lambda\,d\mu(\lambda)$ is pointwise defined.
\end{theorem}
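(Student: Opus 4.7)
The plan is to exploit the Hilbert--Schmidt condition to define $T_\lambda$ explicitly via an orthonormal basis expansion, and then verify the pointwise identity first on finite linear combinations and extend by density.

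First, fix an orthonormal basis $(e_n)_{n \in \NN}$ of $\calH$ and, for each $n$, choose a measurable section $s_n \in \calF$ representing $T(e_n) \in \int^\oplus_\Lambda \calH_\lambda\,d\mu(\lambda)$. The Hilbert--Schmidt condition gives
$$ \sum_n \int_\Lambda \|s_n(\lambda)\|_\lambda^2 \, d\mu(\lambda) = \sum_n \|T(e_n)\|^2 = \|T\|_\HS^2 < \infty, $$
so by Tonelli the set $\Lambda_0 = \{\lambda \in \Lambda : \sum_n \|s_n(\lambda)\|_\lambda^2 < \infty\}$ has full $\mu$-measure. For $\lambda \in \Lambda_0$ define
$$ T_\lambda(v) := \sum_{n \in \NN} \langle v, e_n\rangle_\calH \, s_n(\lambda), $$
which converges absolutely in $\calH_\lambda$ by Cauchy--Schwarz and gives a continuous (in fact Hilbert--Schmidt) linear operator $T_\lambda : \calH \to \calH_\lambda$ with $\|T_\lambda\|_\HS^2 = \sum_n \|s_n(\lambda)\|_\lambda^2$; set $T_\lambda := 0$ for $\lambda \notin \Lambda_0$.

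It remains to show that $T(v)(\lambda) = T_\lambda(v)$ for $\mu$-almost every $\lambda$, for each $v \in \calH$. For a finite linear combination $v = \sum_{n \leq N} c_n e_n$, linearity of $T$ gives $T(v) = \sum_{n \leq N} c_n T(e_n)$ as an element of the direct integral, so $T(v)(\lambda) = \sum_{n \leq N} c_n s_n(\lambda) = T_\lambda(v)$ outside a null set. For a general $v \in \calH$, set $v_N = \sum_{n \leq N} \langle v, e_n\rangle_\calH e_n$; applying the previous step to each $v_N$ and taking the countable union of the resulting null sets yields a single full-measure set on which $T(v_N)(\lambda) = T_\lambda(v_N)$ holds simultaneously for all $N$. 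Since $v_N \to v$ in $\calH$ implies $T(v_N) \to T(v)$ in the direct integral, a subsequence $T(v_{N_k})(\lambda) \to T(v)(\lambda)$ converges in $\calH_\lambda$ for almost every $\lambda$; meanwhile on $\Lambda_0$ the continuity of $T_\lambda$ gives $T_\lambda(v_{N_k}) \to T_\lambda(v)$. Intersecting the three full-measure sets thus produced yields $T(v)(\lambda) = T_\lambda(v)$ a.e., as required.

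The main obstacle is the careful bookkeeping of null sets: the identity for the partial sums $v_N$ must be arranged to hold on one fixed full-measure set before passing to the limit, and one must extract a pointwise a.e.\ convergent subsequence from the norm-convergent sequence $T(v_N) \to T(v)$ in the direct integral. Once this is organized, the Hilbert--Schmidt estimate does all the analytic work.
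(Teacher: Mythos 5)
Your proof is correct. Note that the paper does not prove this statement at all -- it is quoted as the Gelfand--Kostyuchenko theorem with a reference to \cite{Ber88} -- so there is no internal argument to compare against; your orthonormal-basis expansion with the Tonelli/Hilbert--Schmidt estimate, the a.e.\ identity on finite linear combinations, and the passage to the limit via an a.e.\ convergent subsequence is precisely the standard proof of that theorem, and your handling of the null sets (including setting $T_\lambda=0$ off $\Lambda_0$ so that $T_\lambda$ is defined for every $\lambda$, as the paper's definition of ``pointwise defined'' requires) is sound.
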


The following statements are known by \cite[Lemma 1.3]{Ber88}. We include a short proof for convenience.

\begin{lemma}\label{lem:DenseImageAndEquivariance}
	Assume that $T:\calH\to\int^\oplus_\Lambda\calH_\lambda\,d\mu(\lambda)$ is pointwise defined
	\begin{enumerate}
		\item\label{lem:DenseImageAndEquivariance1} If $T$ has dense image, then almost every $T_\lambda:\calH\to\calH_\lambda$ has dense image.
		\item\label{lem:DenseImageAndEquivariance2} If $T$ is equivariant with respect to continuous representations of a Lie group $H$ on $\calH$ and each $\calH_\lambda$, then almost every $T_\lambda$ is equivariant.
	\end{enumerate}
\end{lemma}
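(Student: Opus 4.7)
The plan is to exploit separability in two ways: for part (1), the countable family of spanning sections $(s_n)$ from the definition of a measurable family, and for part (2), countable dense subsets of $H$ and $\calH$. Passing through these countable collections converts ``a.e.\ statement depending on one argument'' into ``a single null set outside of which a statement holds for all arguments'', via continuity.

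For (\ref{lem:DenseImageAndEquivariance1}), let $(s_n)_{n\in\NN}\subseteq\calF$ be the countable spanning family from the definition of a measurable family of Hilbert spaces. Density of the image of $T$ yields, for each $n$, a sequence $(v_{n,k})_{k\in\NN}\subseteq\calH$ with $T(v_{n,k})\to s_n$ in $\int^\oplus_\Lambda\calH_\lambda\,d\mu(\lambda)$. By definition of the norm on the direct integral, this is an $L^2$-convergence of sections; passing to a subsequence (depending on $n$) I can ensure pointwise a.e.\ convergence $T(v_{n,k_j})(\lambda)\to s_n(\lambda)$. Combined with the pointwise-defined identity $T(v_{n,k_j})(\lambda)=T_\lambda(v_{n,k_j})$, which holds off a null set $N_{n,k_j}$, I conclude that off the single null set $\bigcup_{n,j} N_{n,k_j}\cup M$ (where $M$ collects the subsequence exceptional sets), every $s_n(\lambda)$ lies in $\overline{T_\lambda(\calH)}$. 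Since $\{s_n(\lambda)\}$ spans a dense subspace of $\calH_\lambda$, this forces $T_\lambda$ to have dense image.

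For (\ref{lem:DenseImageAndEquivariance2}), fix countable dense subsets $(h_m)_{m\in\NN}\subseteq H$ and $(v_n)_{n\in\NN}\subseteq\calH$. For each pair $(m,n)$, equivariance of $T$ gives $T(h_m\cdot v_n) = h_m\cdot T(v_n)$ in the direct integral. Applying the pointwise-defined property to both sides (and to $T(v_n)$), there is a null set $N_{m,n}\subseteq\Lambda$ outside of which
$$ T_\lambda(h_m\cdot v_n) = h_m\cdot T_\lambda(v_n). $$
Let $N=\bigcup_{m,n}N_{m,n}$, still a null set. For $\lambda\notin N$, both sides are restrictions of the continuous maps $H\times\calH\to\calH_\lambda$, $(h,v)\mapsto T_\lambda(h\cdot v)$ and $(h,v)\mapsto h\cdot T_\lambda(v)$; continuity uses that $T_\lambda$ is continuous by hypothesis and that the representations of $H$ on $\calH$ and on $\calH_\lambda$ are continuous (hence jointly continuous in $(h,w)$). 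Since the two continuous maps agree on the dense subset $\{(h_m,v_n):m,n\in\NN\}$, they agree everywhere, so $T_\lambda$ is $H$-equivariant.

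The only mildly delicate point is in (\ref{lem:DenseImageAndEquivariance1}): $L^2$-convergence in the direct integral does not imply pointwise a.e.\ convergence directly, which is why I extract subsequences before combining with the pointwise-defined identity. Everything else is bookkeeping of countable unions of null sets, which works precisely because all relevant spaces are separable.
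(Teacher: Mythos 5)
Your proof is correct. Part (\ref{lem:DenseImageAndEquivariance2}) is essentially identical to the paper's argument: test equivariance on a countable dense set of pairs $(h_m,v_n)$, collect the null sets, and extend by continuity (the paper leaves the continuity step implicit; you spell out the joint continuity, which is fine since strongly continuous Banach representations are jointly continuous by Banach--Steinhaus). In part (\ref{lem:DenseImageAndEquivariance1}) the overall strategy is the same --- reduce density of $T_\lambda(\calH)$ to the statement that each $s_n(\lambda)$ lies in $\overline{T_\lambda(\calH)}$ off a single null set --- but your mechanism for that key step differs from the paper's. The paper argues by contradiction: if the ``bad'' set $\Lambda_n=\{\lambda:s_n(\lambda)\notin\overline{T_\lambda(\calH)}\}$ had positive measure, then splitting $s_n(\lambda)$ into its components along $\overline{T_\lambda(\calH)}$ and its orthogonal complement gives a lower bound $\|s_n-T(v)\|^2\geq\int_{\Lambda_n}\|s_n(\lambda)_\perp\|^2\,d\mu(\lambda)>0$ uniform in $v$, contradicting dense image. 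You instead approximate $s_n$ by $T(v_{n,k})$ in the $L^2$-norm of the direct integral and extract a subsequence converging pointwise almost everywhere, then combine with the pointwise-defined identity on a countable union of null sets. Both routes are elementary and valid; yours trades the projection/contradiction argument (which tacitly requires one to handle measurability of $\Lambda_n$ and of $\lambda\mapsto\|s_n(\lambda)_\perp\|$) for the standard a.e.-convergent-subsequence fact plus some bookkeeping, while the paper's version is more direct and avoids subsequences altogether.
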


\begin{proof}
	For part \eqref{lem:DenseImageAndEquivariance1}, let $(s_n)\subseteq\int^\oplus_\Lambda\calH_\lambda\,d\mu(\lambda)$ as in the definition of the direct integral. For each $n$ we let
	$$ \Lambda_n = \{\lambda\in\Lambda:s_n(\lambda)\notin\overline{T_\lambda(\calH)}\}. $$
	We first show that every $\Lambda_n$ has measure zero. Assume to the contrary that some $\Lambda_n$ has positive measure. Then
	\begin{align*}
		\|s_n-T(v)\|^2 &= \int_\Lambda\|s_n(\lambda)-T_\lambda(v)\|^2\,d\mu(\lambda) \geq \int_{\Lambda_n} \|s_n(\lambda)-T_\lambda(v)\|^2\,d\mu(\lambda)\\
		&= \int_{\Lambda_n} \|s_n(\lambda)_\perp\|^2+\|s_n(\lambda)_\|-T_\lambda(v)\|^2\,d\mu(\lambda)\ \geq \int_{\Lambda_n} \|s_n(\lambda)_\perp\|^2\,d\mu(\lambda),
	\end{align*}
	where $s_n(\lambda)_\|$ resp. $s_n(\lambda)_\perp$ denotes the orthogonal projection of $s_n(\lambda)$ to $\overline{T_\lambda(\calH)}$ resp. $T_\lambda(\calH)^\perp$. The latter integral is positive since $s_n(\lambda)_\perp\neq0$ on $\Lambda_n$ which is of positive measure. This shows that $\|s_n-T(v)\|\geq c>0$ for all $v\in\calH$, contradicting the fact that $T$ has dense image.\\
	We have shown that $\Lambda_n$ is of measure zero for every $n$, hence the countable union $\bigcup_n\Lambda_n$ has measure zero. This implies for almost every $\lambda$ that $s_n(\lambda)\in\overline{T_\lambda(\calH)}$ for all $n$, so by the assumption that $(s_n(\lambda))_n$ spans a dense subspace of $\calH_\lambda$ for every $\lambda$, we obtain that $\overline{T_\lambda(\calH)}=\calH_\lambda$ for almost every $\lambda$.
	
	To show \eqref{lem:DenseImageAndEquivariance2}, denote by $\pi$ the representation of $H$ on $\calH$ and by $\tau_\lambda$ the representation of $H$ on $\calH_\lambda$, $\lambda\in\Lambda$. For fixed $h\in H$ and $v\in\calH$, the $H$-equivariance of $T$ implies that
	$$ [T_\lambda\circ\pi(h)](v) = [\tau_\lambda(h)\circ T_\lambda](v) \qquad \mbox{for almost every }\lambda\in\Lambda. $$
	Letting $h$ resp. $v$ run through a countable dense subset of $H$ resp. $\calH$ shows the claim.
\end{proof}

\section{Sobolev norms on unitary representations}

Let $G$ be a real reductive group, $\frakg$ its Lie algebra and $\frakg=\frakk\oplus\frakp$ a Cartan decomposition. Denote by $\Delta_\frakk\in\calU(\frakk)$ the Casimir element of $\frakk$ with respect to an invariant inner product on $\frakk$. For an irreducible unitary representation $(\pi,\calH)$ of $G$ we write $(\pi^\infty,\calH^\infty)$  for the subrepresentation on the Fréchet space of smooth vectors. For every $N\in\NN$,
$$ \|v\|_N^2 = \sum_{j=0}^N \|(d\pi(\Delta_\frakk^j)v\|^2 \qquad (v\in\calH^\infty) $$
defines a continuous norm $\|\cdot\|_N$ on $\calH^\infty$ which dominates the Hilbert space norm $\|\cdot\|$ of $\calH$. Therefore, the completion $\calH^N$ of $\calH^\infty$ with respect to the norm $\|\cdot\|_N$ naturally embeds into $\calH$ and yields a scale of Hilbert spaces
$$ \calH^\infty\subseteq\ldots\subseteq\calH^{N+1}\subseteq\calH^N\subseteq\ldots\subseteq\calH^0=\calH. $$

\begin{lemma}\label{lem:SobolevEmbedding}
	For $4N>\dim\frakk$ the embedding $\calH^N\hookrightarrow\calH$ is Hilbert--Schmidt.
\end{lemma}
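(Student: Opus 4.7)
The plan is to reduce the Hilbert--Schmidt condition to a convergence statement about a series indexed by $\widehat K$, where $K$ is the maximal compact subgroup of $G$ with Lie algebra $\frakk$. Since $\pi$ is an irreducible unitary representation, Harish-Chandra's admissibility theorem yields an orthogonal decomposition $\calH = \widehat{\bigoplus}_{\delta \in \widehat K} \calH(\delta)$ into finite-dimensional $K$-isotypic components of dimension $m(\delta,\pi)\dim V_\delta$. Because $\Delta_\frakk$ lies in the center of $\calU(\frakk)$, Schur's lemma forces it to act on each $\calH(\delta)$ by a scalar $c(\delta)\in\RR$, given in terms of the highest weight $\lambda_\delta$ by $|c(\delta)| = \langle\lambda_\delta+2\rho_\frakk,\lambda_\delta\rangle$. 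In particular the $K$-type decomposition is orthogonal for every norm $\|\cdot\|_N$, with $\|v\|_N^2 = \bigl(\sum_{j=0}^N c(\delta)^{2j}\bigr)\|v\|^2$ for $v\in\calH(\delta)$.

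Rescaling any orthonormal basis of each $\calH(\delta)$ in $\|\cdot\|$ produces an orthonormal basis of $\calH^N$; pairing with the original basis of $\calH$ evaluates the Hilbert--Schmidt norm of the inclusion $\iota:\calH^N\hookrightarrow\calH$ as
$$ \|\iota\|_{\HS}^2 = \sum_{\delta\in\widehat K} \frac{m(\delta,\pi)\,\dim V_\delta}{\sum_{j=0}^N c(\delta)^{2j}}. $$
To show this sum is finite, I would combine four standard inputs: the admissibility bound $m(\delta,\pi) \leq \dim V_\delta$ (via Casselman's subrepresentation theorem, which embeds $\pi$ into a principal series whose restriction to $K$ is $\Ind_M^K(\sigma)$ for a compact $M\subset K$); Weyl's dimension formula, giving $\dim V_\delta = O(|\lambda_\delta|^{|\Phi_\frakk^+|})$; the eigenvalue estimate $|c(\delta)| \gtrsim |\lambda_\delta|^2$ for large highest weights, hence $\sum_{j=0}^N c(\delta)^{2j} \gtrsim |\lambda_\delta|^{4N}$; and the counting estimate that the number of dominant weights with $|\lambda_\delta|\leq R$ is $O(R^{\rank\frakk})$. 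Comparing the series to an integral, the tail is dominated by $\int_1^\infty R^{\rank\frakk + 2|\Phi_\frakk^+| - 4N - 1}\,dR$, which converges exactly when $\rank\frakk + 2|\Phi_\frakk^+| < 4N$, and this is the stated hypothesis since $\dim\frakk = \rank\frakk + 2|\Phi_\frakk^+|$.

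The main obstacle is securing the sharp multiplicity bound $m(\delta,\pi) \leq \dim V_\delta$, since a direct appeal to admissibility yields only the finiteness of each $m(\delta,\pi)$ without any uniform polynomial control. Reducing via the subrepresentation theorem to a principal series, followed by Frobenius reciprocity for $\Ind_M^K(\sigma)$, is the cleanest route; once this estimate is in hand the remaining steps are essentially bookkeeping with the Weyl character theory of $K$, and the numerical coincidence $\dim\frakk = \rank\frakk + 2|\Phi_\frakk^+|$ accounts for the sharpness of the condition $4N>\dim\frakk$.
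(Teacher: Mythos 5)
Your proposal is correct and follows essentially the same route as the paper: decompose $\calH$ into $K$-isotypic components, use that the Casimir $\Delta_\frakk$ acts on each component by a scalar growing like $|\lambda_\delta|^2$, express the Hilbert--Schmidt norm of $\calH^N\hookrightarrow\calH$ as the sum $\sum_\delta \dim\calH(\delta)\big/\sum_{j=0}^N c(\delta)^{2j}$, bound $\dim\calH(\delta)\lesssim(\dim V_\delta)^2$ by admissibility, and conclude convergence for $4N>\dim\frakk=\rank\frakk+2|\Phi_\frakk^+|$ by the Weyl dimension formula and lattice-point counting. The only deviations are minor: you obtain the multiplicity bound $m(\delta,\pi)\leq\dim V_\delta$ via Casselman's subrepresentation theorem and Frobenius reciprocity where the paper simply cites Harish-Chandra's bound $\dim\calH[\sigma]\leq C(\dim\sigma)^2$, and you omit the paper's one-line reduction to connected $G$ (needed so that $\widehat{K}$ is parametrized by highest weights and the Weyl dimension formula applies), neither of which affects the substance of the argument.
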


\begin{proof}
	Note that we may assume $G$ to be connected by decomposing $\calH$ into the direct sum of finitely many irreducible representations of the identity component $G_0$ of $G$.\\
	Next, decompose $\calH=\bigoplus_{\sigma\in\widehat{K}}\calH[\sigma]$ into $K$-isotypic components. Then $d\pi(\Delta)|_{\calH[\sigma]}$ is a scalar multiple of the identity. To describe the scalar, we identify an irreducible representation $\sigma$ of $K$ with its highest weight in $i\frakt^*$ with respect to a maximal torus $\frakt\subseteq\frakk$ and a system of positive roots. Then
	$$ d\pi(\Delta_\frakk)|_{\calH[\sigma]} = -(|\sigma+\rho_\frakk|^2-|\sigma|^2)\id_{\calH[\sigma]}, $$
	where $|\cdot|$ denotes the norm on $\frakt^*$ induced from the same invariant inner product on $\frakk$ that was used for the construction of the Casimir element $\Delta_\frakk$. It follows that the square of the Hilbert--Schmidt norm of the embedding $\calH^N\hookrightarrow\calH$ is given by
	$$ \sum_{\sigma\in\widehat{K}}\frac{\dim\calH[\sigma]}{\sum_{j=0}^N(|\sigma+\rho_\frakk|^2-|\sigma|^2)^{2j}} \leq C\sum_{\sigma\in\widehat{K}} (1+|\sigma|)^{-4N}\dim\calH[\sigma] $$
	for some $C=C_N>0$, where $\rho_\frakk\in i\frakt^*$ is half the sum of all positive roots. By a result of Harish--Chandra~\cite[Theorem 4]{HC54} combined with the Weyl Dimension Formula, there exist $C',C''>0$ such that
	$$ \dim\calH[\sigma] \leq C'(\dim\sigma)^2 \leq C''(1+|\sigma|)^{\dim\frakk-\dim\frakt} \qquad \mbox{for all }\sigma\in\widehat{K}. $$
	Hence, the square of the Hilbert--Schmidt norm of the embedding $\calH^N\hookrightarrow\calH$ can be bounded by a multiple of
	$$ \sum_{\sigma\in\widehat{K}} (1+|\sigma|)^{\dim\frakk-\dim\frakt-4N}. $$
	Summation is over the highest weight lattice in $i\frakt^*$, hence the sum is finite if and only if $\dim\frakk-\dim\frakt-4N<-\dim\frakt$.
\end{proof}

\begin{remark}
	Lemma~\ref{lem:SobolevEmbedding} essentially shows that $\calH^\infty$ is a nuclear Fr\'{e}chet space, which is well-known by the work of Harish-Chandra. Using this, most of the statements in Theorem~\ref{thm:Main} and Corollary~\ref{cor:Main} are also contained in \cite[Chapter 3.3]{Li18}.
\end{remark}

\section{Proof of the main results}

Combining Lemma~\ref{lem:SobolevEmbedding} with Theorem~\ref{thm:GKThm} shows that the restriction of $T$ to the Sobolev completion $\calH_\pi^N$ is pointwise defined for $N$ sufficiently large. Composing with the continuous linear embedding $\calH_\pi^\infty\hookrightarrow\calH_\pi^N$ and using Lemma~\ref{lem:DenseImageAndEquivariance}~\eqref{lem:DenseImageAndEquivariance2} shows Theorem~\ref{thm:Main}.

Now let $A_{\pi,\tau}^\infty:\calH_\pi^\infty\to\calH_\tau\otimes\calM_{\pi,\tau}$, $\tau\in\widehat{H}$, be as in Theorem~\ref{thm:Main}. For an orthonormal basis $(w_\alpha)_{\alpha=1,\ldots,m(\pi,\tau)}$ of $\calM_{\pi,\tau}$, $m(\pi,\tau)=\dim\calM_{\pi,\tau}\in\NN\cup\{\infty\}$, write
$$ A_{\pi,\tau}^\infty(v) = \sum_\alpha A_{\pi,\tau,\alpha}^\infty(v)\otimes w_\alpha \qquad (v\in\calH^\infty) $$
with $A_{\pi,\tau,\alpha}\in\Hom_H(\calH_\pi^\infty,\calH_\tau)$. If $A_{\pi,\tau}^\infty$ has dense image, then the operators $A_{\pi,\tau,\alpha}^\infty$ have to be linearly independent, so $m(\pi,\tau)=\dim\calM_{\pi,\tau}\leq\dim\Hom_H(\calH_\pi^\infty,\calH_\tau)$. Lemma~\ref{lem:DenseImageAndEquivariance}~\eqref{lem:DenseImageAndEquivariance1} implies that this is the case for almost every $\tau$, so Corollary~\ref{cor:Main} follows.


\providecommand{\bysame}{\leavevmode\hbox to3em{\hrulefill}\thinspace}
\providecommand{\href}[2]{#2}

\end{document}